\documentclass[12pt,reqno]{amsart}
\usepackage{enumerate, latexsym, amsmath, amsfonts, amssymb, amsthm, color}
\def\pmod #1{\ ({\rm{mod}}\ #1)}
\def\Z{\Bbb Z}
\def\N{\Bbb N}

\def\l{\left}
\def\r{\right}
\def\bg{\bigg}
\def\({\bg(}
\def\){\bg)}
\def\t{\text}
\def\f{\frac}

\def\mo{{\rm{mod}\ }}

\def\ls{\leqslant}
\def\gs{\geqslant}

\def\bi{\binom}

\def\eq{\equiv}

\def\da{\delta}

\newcommand{\qbinom}[2]{\genfrac{[}{]}{0pt}{}{#1}{#2}_{q}}

\theoremstyle{plain}
\newtheorem{theorem}{Theorem}

\newtheorem{lemma}{Lemma}

\newtheorem{conjecture}{Conjecture}
\theoremstyle{definition}

\theoremstyle{remark}
\newtheorem{remark}{Remark}

 \vspace{4mm}

\begin{document}

\hbox{Preprint, {\tt arXiv:2607.07638}}
\medskip

\title
[{A new kind of numbers and related congruences}]
{A new kind of numbers \\ and related congruences}

\author
[Z.-W. Sun] {Zhi-Wei Sun}

\address{School of Mathematics, Nanjing
University, Nanjing 210093, People's Republic of China}
\email{zwsun@nju.edu.cn}

\subjclass[2010]{Primary 11B65, 11A07; Secondary 05A10, 05A19, 11B68, 11B83.}
\keywords{Multi-nomial coefficients, Domb numbers, Franel numbers, $p$-adic congruences.
\newline \indent Supported by the Natural Science Foundation of China (grant no. 12371004).}

\begin{abstract} For integers $l>0$ and $m\geqslant0$, we introduce the numbers
$$S_l^{(m)}(n)=\sum_{k_1,\ldots,k_l\in\mathbb N\atop k_1+\cdots+k_l=n}\binom n{k_1,\ldots,k_l}^m
\ \ (n=0,1,2,\ldots),$$
and prove that for any prime $p$ not dividing $l+1$ we have the congruence
$$\sum_{n=1}^{p-1}\frac{(-1)^{mn}}{n^{m-1}}S_l^{(m)}(n)\equiv0\pmod p.$$
We also obtain a $q$-analogue of this result.

For the Domb numbers given by
$$D(n)=\sum_{k=0}^n\binom nk^2\binom{2k}k\binom{2(n-k)}{n-k}=S_4^{(2)}(n)\ \ (n=0,1,2,\ldots),$$
we confirm a previous conjecture which states that
$$\sum_{n=1}^{p-1}\frac{D(n)}n\equiv\left(\frac p3\right)\frac 25pB_{p-2}\left(\frac13\right)\pmod{p^2}$$
for any prime $p$, where $(\frac p3)$ is the Legendre symbol, and $B_{p-2}(x)$ is the Bernoulli polynomial of degree $p-2$.
\end{abstract}
\maketitle

\section{Introduction}
\setcounter{lemma}{0}
\setcounter{theorem}{0}
\setcounter{corollary}{0}
\setcounter{remark}{0}

Let $m\in\Z^+=\{1,2,3,\ldots\}$. The {\it Franel numbers of order $m$} are defined by
\begin{equation}\label{Fr}f_n^{(m)}:=\sum_{k=0}^n\bi nk^m\ \ (n\in\N=\{0,1,2,\ldots\}).
\end{equation}
Those $f_n=f_n^{(3)}$ with $n\in\N$ are the usual Franel numbers introduced by J. Franel \cite{F} in 1894 who noted the recurrence relation
$$(n+1)^2f_{n+1}=(7n^2+7n+2)f_n+8n^2f_{n-1}\ \ (n=1,2,3,\ldots).$$
Note that
$$f_n^{(2)}=\sum_{k=0}^n\bi nk\bi n{n-k}=\bi{2n}n\quad\t{for any}\ n\in\N$$
by the Chu-Vandermone identity (cf. \cite[p.\,169]{GKP}).
In 2013, the author \cite{S13} proved that for any prime $p>3$ we have
\begin{equation}\label{fp2}\sum_{n=1}^{p-1}\f{(-1)^n}{n}f_n\eq0\pmod{p^2}\end{equation}
and
\begin{equation}\label{f-p}\sum_{n=1}^{p-1}\f{(-1)^{mn}}{n^{m-1}}f_n^{(m)}\eq0\pmod p.
\end{equation}

The {\it Domb numbers} (cf. \cite{OEIS}) are given by
\begin{equation}\label{Domb}D(n)=\sum_{k=0}^n\bi nk^2\bi{2k}k\bi{2(n-k)}{n-k}\quad (n\in\N).
\end{equation}
The values of $D(0),\ldots,D(9)$ are
$$1,\, 4,\, 28,\, 256,\, 2716,\, 31504,\, 387136,\, 4951552,\, 65218204,\, 878536624$$
respectively. It is well known that
$$(n+1)^3D(n+1)=2(2n+1)(5n^2+5n+2)D(n)-64n^3D(n-1)$$
for all $n\in\Z^+$.

The Domb numbers have several combinatorial interpretations. For example, $D(n)$ with $n\in\N$
 is the number of $2n$-step polygons on the diamond lattice, and it is also the $2n$-moment of the distance from the origin of a $4$-step random walk in the plane.

By H. H. Chan, S. H. Chan and Z. Liu \cite{CCL} and M. D. Rogers \cite{Rogers},
\begin{equation*}\sum_{k=0}^\infty\f{5k+1}{64^k}D(k)=\f 8{\sqrt3\,\pi}
\ \t{and}\ \sum_{k=0}^\infty\f{3k+1}{(-32)^k}D(k)=\f2{\pi}.\end{equation*}
Motivated by this, the author \cite{S20} proved that
$$\sum_{k=1}^\infty\f{k^2(5k-3)}{64^k}D(k)=\f{8\sqrt3}{9\pi}
\ \t{and}\ \sum_{k=1}^\infty\f{k(3k+1)^2}{(-32)^k}D(k)=-\f 2{\pi}.$$
See also J.M. Campbell \cite{Cam} for relations between Domb numbers and modular forms,
and J.-C. Liu \cite{Liu} for some supercongruences involving Domb numbers.

In 2011, the author posed the following conjecture (cf.
\cite[Conjecture 5.1]{S13} and \cite[Conjecture 76]{S19}) which remains open.

\begin{conjecture} Let $p>5$ be a prime. Then
\begin{align*}&\sum_{k=0}^{p-1}D(k)
\\\eq&\begin{cases} 4x^2-2p\pmod{p^2}&\t{if}\ p\eq1,4\ (\mo\ 15)\ \t{and}\ p=x^2+15y^2,
\\2p-12x^2\pmod{p^2}&\t{if}\ p\eq2,8\ (\mo\ 15)\ \t{and}\ p=3x^2+5y^2,
\\0\pmod{p^2}&\t{if}\ (\f{-15}{p})=-1,\ \t{i.e.,}\ p\eq 7,11,13,14\pmod{15}.\end{cases}
\end{align*}
where $(\f{\cdot}p)$ is the Legendre symbol, and $x$ and $y$ are integers.
\end{conjecture}

Now we introduce a new kind of numbers which unifies Franel numbers of order $m$ and Domb numbers.

For $l\in\Z^+$ and $m,n\in\N$, we define
\begin{equation}S_l^{(m)}(n):=\sum_{k_1,\ldots,k_l\in\N\atop k_1+\cdots+k_l=n}\bi n{k_1,\ldots,k_l}^m,
\end{equation}
and call $l$ and $m$ its {\it level} and {\it height}, respectively.
Clearly,
$$S_l^{(0)}(n)=\bi{l+n-1}n\ \ \t{and}\ \ S_l^{(1)}(n)=l^n$$
by enumerative combinatorics and multi-nomial theorem. Also,
$$S_2^{(m)}(n)=f_n^{(m)}\quad\t{if}\ m>0.$$

As in \cite{S16}, for any $n\in\N$ we define the polynomial
$$g_n(x):=\sum_{k=0}^n\bi nk^2\bi{2k}kx^k$$
and write $g_n$ for $g_n(1)$. V.J.W. Guo, G.-S. Mao and H. Pan \cite{GMP} proved the author's conjecture that
$$\f1n\sum_{k=0}^{n-1}(4k+3)g_k(x)\in\Z[x]\quad\t{for all}\ n\in\Z^+.$$
Note that
\begin{align*} \sum_{i,j\in\N\atop i+j\ls n}\l(\f{n!}{i!j!(n-i-j)!}\r)^2
&=\sum_{i,j\in\N\atop i+j\ls n}\bi{n}{i+j}^2\bi{i+j}j^2
\\&=\sum_{k=0}^n\bi nk^2\sum_{j=0}^k\bi kj^2=\sum_{k=0}^n\bi nk^2\bi{2k}k.
\end{align*}
and so
\begin{equation}S_3^{(2)}(n)=g_n.\end{equation}
By \cite{S16}, for any odd prime $p$ we have
$$\sum_{n=1}^{p-1}\f{g_n(x)}n\eq0\pmod p,$$
and in particular
\begin{equation}\label{g-p}\sum_{n=1}^{p-1}\f{g_n}n\eq0\pmod p.
\end{equation}

For any $n\in\N$, clearly
\begin{align*} \sum_{i,j\in\N\atop i+j\ls n}\l(\f{n!}{i!j!(n-i-j)!}\r)^3
&=\sum_{i,j\in\N\atop i+j\ls n}\bi{n}{i+j}^3\bi{i+j}j^3
\\&=\sum_{k=0}^n\bi nk^3\sum_{j=0}^k\bi kj^3.
\end{align*}
and thus
\begin{equation}\label{3,3}S_3^{(3)}(n)=\sum_{k=0}^n\bi nk^3f_k.
\end{equation}

For $a,b,c,d\in\N$ with $a+b+c+d=n$, clearly
$$\bi n{a,b,c,d}=\bi n{a+b}\bi{a+b}a\bi{n-a-b}c.$$
Thus
\begin{align*}\sum_{a,b,c,d\in\N\atop a+b+c+d=n}\bi n{a,b,c,d}^2
&=\sum_{a,b,c,d\in\N\atop a+b+c+d=n}\bi n{a+b}^2\bi{a+b}a^2\bi{n-a-b}c^2
\\&=\sum_{k=0}^n\bi nk^2\sum_{a=0}^k\bi ka^2\sum_{c=0}^{n-k}\bi{n-k}c^2
\\&=\sum_{k=0}^n\bi nk^2\bi{2k}k\bi{2(n-k)}{n-k}
\end{align*}
This shows that
\begin{equation}\label{S-D}S_4^{(2)}(n)=D(n).
\end{equation}

In this paper, we establish the following general congruence modulo primes.

\begin{theorem} \label{Th1.1} Let $l\in\Z^+$ and $m\in\N$.
For any prime $p$ not dividing $l+1$, we have
the congruence
\begin{equation}\label{S-cong}\sum_{n=1}^{p-1}\f{(-1)^{mn}}{n^{m-1}}S_l^{(m)}(n)\eq0\pmod p.
\end{equation}
\end{theorem}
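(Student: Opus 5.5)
The plan is to rewrite every summand modulo $p$ through Wilson's theorem so that the restriction $k_1+\cdots+k_l=n$ becomes a complete, symmetric condition on $l+1$ variables, and then exploit that symmetry. We may assume $p$ is odd; the case $p=2$ is a direct check. For $0\ls k\ls p-1$ we have $k!\,(p-1-k)!\eq(-1)^{k+1}\pmod p$. Fix $1\ls n\ls p-1$ and put $k_0:=p-1-n\in\{0,\ldots,p-2\}$. Then
$$\bi n{k_1,\ldots,k_l}=\f{n!}{k_1!\cdots k_l!}\eq\f{(-1)^{n+1}}{k_0!\,k_1!\cdots k_l!}\pmod p,$$
and also $n\eq-(k_0+1)\pmod p$. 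Substituting into $(-1)^{mn}n^{1-m}S_l^{(m)}(n)$, the signs cancel: we get $(-1)^{mn}(-1)^{m(n+1)}(-1)^{1-m}=-1$. Hence the left-hand side of (\ref{S-cong}) should be congruent to
$$-\sum_{\substack{k_0,\ldots,k_l\in\{0,\ldots,p-1\}\\ k_0+\cdots+k_l=p-1\\ k_0\ne p-1}}\f{1}{(k_0+1)^{m-1}\,(k_0!\,k_1!\cdots k_l!)^m}\pmod p .$$
The first step is to verify this sign bookkeeping carefully.

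Next I would complete the sum by adding back the excluded tuple $(p-1,0,\ldots,0)$. By Wilson's theorem its contribution is $p^{1-m}\cdot(-1)^{m}$, which is harmless only for $m\ls1$. So for $m\gs2$ I would instead keep the $k_0$-variable in the form $a_0=k_0+1$. With this choice the weight attached to $k_0$ becomes $1/\bigl(k_0!\,(k_0+1)!^{m-1}\bigr)$, which is a $p$-integral quantity for $k_0\ls p-2$. The remaining task is to evaluate
$$T:=\sum_{k_0+\cdots+k_l=p-1}\ \f{w(k_0)}{(k_1!\cdots k_l!)^m},\qquad w(k)=\f{1}{k!\,(k+1)!^{m-1}},$$
modulo $p$. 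This is the coefficient of $x^{p-1}$ in $W(x)\,E(x)^l$, where $W(x)=\sum_{k\ls p-2}w(k)x^k$ and $E(x)=\sum_{k\ls p-1}x^k/k!^m$.

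The main obstacle is to show that this coefficient vanishes modulo $p$ whenever $p\nmid l+1$. My first attempt is to find a derivative relation linking $W$ and $E$ modulo $p$ and $x^{p}$. Note that $\f{d}{dx}\bigl(x^{k+1}/(k+1)!^{m}\bigr)=x^k/(k!\,(k+1)!^{m-1})$, which gives
$$W(x)\eq\f{d}{dx}E(x)\pmod{(p,\,x^{p-1})},$$
up to the top-degree term that must be tracked separately. Granting this, we get $W\,E^l=\f1{l+1}\f{d}{dx}E^{l+1}$. The coefficient of $x^{p-1}$ in a derivative is $p$ times a coefficient of $E^{l+1}$, so it vanishes modulo $p$, provided $E^{l+1}$ has $p$-integral coefficients in degree $p$. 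This is exactly where the hypothesis $p\nmid l+1$ enters, since we must divide by $l+1$.

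The care is needed at the boundary. The truncation at degree $p-1$ and the term of $E^{l+1}$ in degree $p$ involve $1/k!$ with $k\ls p-1$, so they are $p$-integral. The excluded tuple must be matched with the top coefficient of $E$. I would check these corrections explicitly, and verify the whole argument against the known cases (\ref{f-p}) for $l=2$, the case $m=1$ where $S_l^{(1)}(n)=l^n$, and (\ref{g-p}) for $S_3^{(2)}(n)=g_n$.
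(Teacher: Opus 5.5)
Your argument is correct and is essentially the paper's proof. With $E(x)=\sum_{k=0}^{p-1}x^k/(k!)^m$ (the paper's $F$), the key identity is $[x^{p-1}]E(x)^lE'(x)=\frac{p}{l+1}[x^p]E(x)^{l+1}$. The only difference is the order of steps: you apply Wilson's theorem first and then recognize the coefficient, whereas the paper extracts the coefficient first and then reduces using $(p-1)!\equiv-1$ and $\binom{p-1}{n}\equiv(-1)^n \pmod p$.

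One simplification: $W(x)=E'(x)$ holds exactly, since both sides have degree $p-2$. So the boundary corrections you mention, the "excluded tuple," and the case split $m\le 1$ versus $m\ge 2$ are all unnecessary, and the argument works uniformly for every $m\in\mathbb N$.
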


Note that Theorem \ref{Th1.1} with $l=2$ yields the congruence \eqref{f-p}.

In view of \eqref{S-D}, Theorem \ref{Th1.1} with $l=4$ and $m=2$ yields the congruence
\begin{equation}\label{D-cong}\sum_{n=1}^{p-1}\f{D(n)}n\eq0\pmod p.
\end{equation}
for any prime $p\not=5$.

Recall that the $q$-analogue of an integer $n$ is defined by
$$[n]_q:=\f{1-q^n}{1-q}=\begin{cases}\sum_{0\ls k<n}q^k&\t{if}\ n\gs0,
\\-q^n\sum_{0<k<|n|}q^k&\t{if}\ n<0,\end{cases}$$
which tends to $n$ as $q\to1$. For $N\in\Z^+$, the cyclotomic polynomial
$$\Phi_N(q):=\prod_{a=1\atop (a,N)=1}^N\l(q-e^{2\pi i a/N}\r)$$
is irreducible in the ring $\Z[q]$. Note that $[n]_q=(1-q^n)/(1-q)$
is relatively prime to $\Phi_N(q)$ for any positive integer $n<N$.

Let
$$[0]_q!=1\ \ \t{and}\ \ [n]_q!=\prod_{k=1}^n[k]_q\ \ \t{for}\ n=1,2,3,\ldots.$$
For $k_1,\ldots,k_l\in\N$ with $k_1+\cdots+k_l=n$, the $q$-analogue of the multinomial coefficient
$\bi n{k_1,\ldots,k_l}$ is given by
$$\qbinom{n}{k_1,\ldots,k_l}:=\f{[n]_q!}{[k_1]_q!\cdots[k_l]_q!},$$
which is a polynomial in $q$ with integer coefficients.

For any $l\in\Z^+$ and $m,n\in\N$, we define the polynomial
\begin{equation}\label{qS}
S_l^{(m)}(n;q):=\sum_{k_1,\ldots,k_l\in\N\atop k_1+\cdots+k_l=n}q^{\sum_{j=1}^l jk_j}\qbinom{n}{k_1,\ldots,k_l}^m,
\end{equation}
which is a $q$-analogue of the number $S_l^{(m)}(n)$.

Our next theorem is a $q$-analogue of Theorem \ref{Th1.1}.

\begin{theorem}\label{Th-q} Let $l\in\Z^+$ and $m,n\in\N$. For any integer $N>1$, we have
\begin{equation}\label{q-cong}\sum_{0<n<N}\f{(-1)^{mn}q^{-m\bi n2-(l+1)n}}{[n]_q^{m-1}}[l+1]_{q^n}S_l^{(m)}(n;q)\eq0\pmod{\Phi_N(q)}.
\end{equation}
\end{theorem}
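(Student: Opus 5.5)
I would prove Theorem \ref{Th-q} by embedding the sum over $0<n<N$ into a sum over compositions of $N$ into $l+1$ parts and then using the symmetry among those $l+1$ parts. As a guide I first check the argument for $q=1$, which is Theorem \ref{Th1.1} with $N=p$.

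\textbf{The case $q=1$.} For $0<n<p$ we have
$$\binom pn\Big/p=\binom{p-1}{n-1}\Big/n\equiv\frac{(-1)^{n-1}}{n}\pmod p.$$
Hence for $k_1+\cdots+k_l=n$ and $k_{l+1}=p-n$,
$$F(k):=\left(\frac1p\binom p{k_1,\ldots,k_{l+1}}\right)^m=\left(\frac1p\binom pn\right)^m\binom n{k_1,\ldots,k_l}^m\equiv\frac{(-1)^{m(n-1)}}{n^m}\binom n{k_1,\ldots,k_l}^m .$$
Multiplying by $n=k_1+\cdots+k_l=p-k_{l+1}$ gives
$$\pm\sum_{n=1}^{p-1}\frac{(-1)^{mn}}{n^{m-1}}S_l^{(m)}(n)\equiv\sum_{k}(p-k_{l+1})F(k)\pmod p,$$
where $k$ runs over $(l+1)$-tuples in $\N^{l+1}$ with sum $p$ and every part $<p$.

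Two remarks justify this. First, $F$ is $p$-integral whenever no part equals $p$. Second, the tuples with $k_{l+1}=0$ may be added freely, since they contribute $p\,F(k)\equiv0$. The set of admissible tuples is invariant under permuting coordinates, and $F$ is symmetric. So the right-hand side is congruent to
$$-\sum_k k_{l+1}F(k)=-\frac1{l+1}\sum_k\Big(\sum_{i=1}^{l+1}k_i\Big)F(k)=-\frac p{l+1}\sum_kF(k)\equiv0\pmod p,$$
because $p\nmid l+1$. This is exactly where the hypothesis on $l+1$ enters.

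\textbf{The $q$-analogue.} I would repeat the argument with $[N]_q$ in place of $p$ and with the $q$-multinomial $\qbinom{N}{k_1,\ldots,k_{l+1}}$, working in the localization of $\Z[q]$ at $\Phi_N(q)$. The needed inputs are the following.
\begin{enumerate}[(i)]
\item $\qbinom{N}{n}/[N]_q=\qbinom{N-1}{n-1}/[n]_q$, together with $\qbinom{N-1}{n-1}\equiv(-1)^{n-1}q^{-\binom n2}\pmod{\Phi_N(q)}$. The latter follows from $[N-j]_q\equiv-q^{-j}[j]_q$, and it produces the factor $(-1)^{mn}q^{-m\binom n2}/[n]_q^{m}$.
\item The weight $n$ is replaced by $[n]_q=[N-k_{l+1}]_q\equiv-q^{-k_{l+1}}[k_{l+1}]_q$.
\item The $q$-multinomial of $N$ is symmetric in its $l+1$ arguments, but the weight $q^{\sum_j jk_j}$ in \eqref{qS} is not. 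Under the cyclic shift $(k_1,\ldots,k_{l+1})\mapsto(k_{l+1},k_1,\ldots,k_l)$, the exponent $\sum_j jk_j$ changes by $\sum_{j\le l}k_j-lk_{l+1}$, which is $\equiv n$ modulo $N$ up to the $k_{l+1}$-terms.
\end{enumerate}
By (iii), summing the shifted copies produces $\sum_{j=0}^{l}q^{jn}=[l+1]_{q^n}$. I expect this to be the source of the factor $[l+1]_{q^n}$ and of the normalization $q^{-(l+1)n}$ in \eqref{q-cong}.

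\textbf{Main obstacle.} The hardest step is the exact bookkeeping of the $q$-powers. One must check that, after inserting $[l+1]_{q^n}$ and $q^{-(l+1)n}$, the cyclic symmetrization turns $\sum_k [k_{l+1}]_q\,q^{(\cdot)}F_q(k)$ into a multiple of $\sum_i q^{(\cdot)}[k_i]_q$. That sum should be a multiple of $[N]_q$, which is $\equiv0\pmod{\Phi_N(q)}$; this replaces the step $\sum_i k_i=p$.

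To get this, I would use the telescoping identity $[a]_q+q^a[b]_q=[a+b]_q$ applied along the cyclic order. This is why the weights $q^{jk_j}$ were chosen, and I would adjust the normalization exponent by hand until the telescoping closes.

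No hypothesis on $N$ is needed in the $q$-case, since $[l+1]_{q^n}$ is built into the sum rather than divided out. The classical statement is then recovered by setting $q\to1$, $N=p$, and dividing by $l+1$.
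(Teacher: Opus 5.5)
Your $q=1$ argument is correct; it is the combinatorial form of the paper's identity $(F^{l+1})'=(l+1)F^lF'$. For Theorem~\ref{Th-q} itself, however, the proposal stops exactly at the step that carries the content. You never produce an identity showing that $\sum_k q^{(\cdot)}[k_{l+1}]_q(\cdots)X(k)$ is a multiple of $[N]_q$, and ``adjust the normalization exponent by hand until the telescoping closes'' is not an argument.

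The mechanism you guess also does not work as stated. Iterating $\sigma\colon(k_1,\ldots,k_{l+1})\mapsto(k_{l+1},k_1,\ldots,k_l)$ shifts $e(k)=\sum_{j=1}^{l+1}jk_j$ by $iN-(l+1)(k_{l+1}+k_l+\cdots+k_{l+2-i})$. For $i\ge2$ this depends on individual parts, not only on $n$. So averaging over the $l+1$ shifted copies does not generate $\sum_{j=0}^l q^{jn}$. The shifted weights also do not match the telescoping $[N]_q=\sum_i q^{k_1+\cdots+k_{i-1}}[k_i]_q$ term by term. In fact $[l+1]_{q^n}$ is not created by symmetrization at all: it is already in the summand and must be merged with $[k_{l+1}]_q$.

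The missing idea is an identity that needs only one shift. Let $k$ run over compositions of $N$ into $l+1$ parts, all $<N$, and put $X(k)=\prod_{j=1}^{l+1}([k_j]_q!)^{-m}$, which is symmetric. Then $e(\sigma k)=e(k)+N-(l+1)k_{l+1}$. Since $\sigma$ permutes the index set, $\sum_k q^{e(\sigma k)}X(k)=\sum_k q^{e(k)}X(k)$. Using $1-q^{(l+1)k}=(1-q)[k]_q[l+1]_{q^k}$, this becomes
\[
\sum_k q^{\sum_{j\le l}jk_j}\,[k_{l+1}]_q\,[l+1]_{q^{k_{l+1}}}\,X(k)=[N]_q\sum_k q^{\sum_{j\le l}jk_j}X(k).
\]
Now multiply by $([N-1]_q!)^m$. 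The right side becomes $\equiv0\pmod{\Phi_N(q)}$, because all denominators $[k_j]_q!$ with $k_j<N$ are prime to $\Phi_N(q)$. Grouping the left side by $n=N-k_{l+1}$ turns it into
\[
\sum_{0<n<N}\qbinom{N-1}{n-1}^m[N-n]_q[l+1]_{q^{N-n}}S_l^{(m)}(n;q)/[n]_q^m .
\]
Your reductions (i) and (ii), together with $[l+1]_{q^{N-n}}\equiv q^{-ln}[l+1]_{q^n}$, then give \eqref{q-cong} up to the global sign $(-1)^{m+1}$. The exponent $-(l+1)n$ arises as $-n$ from (ii) plus $-ln$ from this last reduction.

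This is exactly the paper's proof. There, the Jackson derivative of $G(x)=\prod_{j=0}^lF(q^jx)$, with $G(qx)=F(q^{l+1}x)\prod_{j=1}^lF(q^jx)$, is the generating-function form of this single cyclic shift.
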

\begin{remark} As $\Phi_p(q)=[p]_q$ for any prime $p$, Theorem \ref{Th-q} with $q\to1$
yields Theorem \ref{Th1.1}.
\end{remark}

Our third theorem confirms \cite[Conjecture 79(i)]{S19} posed by the author in 2019.

\begin{theorem} \label{Th1.2}  For any prime $p$, we have the congruence
\begin{equation}\label{D-conj}\sum_{n=1}^{p-1}\f{D(n)}n\eq\l(\f p3\r)\f 25pB_{p-2}\l(\f13\r)\pmod{p^2},
\end{equation}
where $(\f p3)$ is the Legendre symbol, and $B_{p-2}(x)$ is the Bernoulli polynomial of degree $p-2$.
\end{theorem}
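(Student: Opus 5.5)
We plan to reduce Theorem \ref{Th1.2} to sums of single binomial products by exchanging the order of summation. Write
$$\sum_{n=1}^{p-1}\f{D(n)}n=\sum_{n=1}^{p-1}\f1n\sum_{k=0}^n\bi nk^2\bi{2k}k\bi{2(n-k)}{n-k}.$$
For $p\ls5$ we simply check the congruence numerically, so assume $p>5$. The key observation is that $\bi{2j}j\eq0\pmod p$ for $p/2<j<p$. Hence in the range $1\ls n\ls p-1$ the only terms that survive modulo $p$ are those with $k\ls (p-1)/2$ and $n-k\ls(p-1)/2$. Modulo $p^2$, terms with exactly one of $k,n-k$ in $((p-1)/2,p-1]$ carry a single factor of $p$. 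These terms must be kept and evaluated modulo $p$, which can be done by writing $\bi{2j}j/p$ in terms of $\bi{2(p-j)}{p-j}^{-1}$ up to sign and harmonic-type factors.

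The next step is to use the identity $\bi nk^2\bi{2k}k\bi{2(n-k)}{n-k}=\bi{2k}k^2\bi{2(n-k)}{n-k}^2\cdot\bi nk^2/\bigl(\bi{2k}k\bi{2(n-k)}{n-k}\bigr)$. Better, we substitute $j=n-k$ and write
$$\sum_{n=1}^{p-1}\f{D(n)}n=\sum_{\substack{j,k\gs0\\0<j+k<p}}\f{1}{j+k}\bi{j+k}k^2\bi{2k}k\bi{2j}j.$$
Modulo $p^2$ with $j,k<p/2$, we expand $\bi{j+k}k$ via the $p$-adic expansions $\bi{2k}k\eq\bi{(p-1)/2}k(-4)^k(1-pH\text{-terms})$. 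This is the standard Sun technique, and it turns the sum into double sums of the form $\sum_{j,k}\bi{(p-1)/2}j\bi{(p-1)/2}k(\cdots)$. Those sums collapse by Vandermonde-type identities to single sums such as $\sum_{k<p/2}\f{(-3)^k}{k}$, $\sum_{k} \bi{2k}k/(k\,m^k)$ or $\sum_{k=1}^{(p-1)/2}\f1{k}\l(\f{k}3\r)$-type sums. The factor $1/5$ appearing in the target strongly suggests that a combinatorial identity produces a factor $5$. Concretely, this should be a telescoping identity whose denominator $5$ is analogous to the $[l+1]_q$ factor with $l=4$ in Theorem \ref{Th-q}. We will find this identity first, likely by treating the inner sum with Zeilberger's algorithm to obtain a WZ-type relation $5\sum_n D(n)/n=$ (simpler sum) $+$ boundary terms.

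The last step is to recognize the resulting single sum modulo $p$. We plan to use the known congruences
$$\sum_{k=1}^{p-1}\f{(\f k3)}k\eq -\l(\f p3\r)\f{?}{}\,pB_{p-2}\l(\tfrac13\r)\cdot\f12 \pmod{p^2}\ \text{type relations},$$
more precisely Lehmer-type results expressing $\sum_{0<k<p/3}1/k^2$ and $\sum_{k=1}^{p-1}(\f k3)/k^2$ via $B_{p-2}(1/3)$ modulo $p$. We also use $\sum_{k=1}^{(p-1)/2}\bi{2k}k/(k\cdot 4^k)$-style congruences of Sun and Tauraso. The main obstacle is the middle step: isolating the correct identity that converts the Domb-weighted harmonic sum into such a character sum with the exact factor $2/5$. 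I would try first to apply the proof of Theorem \ref{Th1.1} itself (whatever polynomial identity yields the factor $l+1=5$) but keep track of the next $p$-adic order, since that identity already explains the mod $p$ vanishing \eqref{D-cong} and its $p^2$ refinement should produce the Bernoulli term.
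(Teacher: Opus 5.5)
There is a genuine gap: the proposal never supplies the step that converts $\sum_{n=1}^{p-1}D(n)/n$ into a computable single sum, and the mechanisms you suggest for it would not work as stated. Since $\sum_{n=1}^{p-1}D(n)/n\equiv0\pmod p$, you must determine $\frac1p\sum_{n}D(n)/n$ modulo $p$. That means evaluating exactly every first-order correction: the $pH$-terms coming from $\binom{2k}{k}\equiv\binom{(p-1)/2}{k}(-4)^k(1-\cdots)$, and the boundary terms where one of $k,n-k$ exceeds $p/2$.

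After your substitution, the main part contains inner sums such as $\sum_k\binom nk^2\binom{(p-1)/2}{k}\binom{(p-1)/2}{n-k}$. Because of the extra factor $\binom nk^2$, these are ${}_4F_3$-type sums rather than Vandermonde sums, so nothing ``collapses'' to a single sum. The WZ relation ``$5\sum_nD(n)/n=\cdots$'' you hope for is not exhibited, and the target single sum is left unidentified (your displayed congruence still contains a ``?''). Your closing instinct, to reuse the proof of Theorem~\ref{Th1.1} with $l+1=5$ and track the next $p$-adic order, is the right one. Carrying it out, however, requires ideas that are absent from the proposal.

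In the paper the factor $5$ comes from reflecting $n\mapsto p-n$. One writes $\frac{D(p-n)}{p-n}=\frac{p^2}{(p-n)\binom pn^2}\sum_{a+b+c+d=p-n}Q(a,b,c,d,n)$ with $Q=((p-1)!)^2/(a!\,b!\,c!\,d!\,n!)^2$. One then uses $\frac{p^2}{(p-n)\binom pn^2}\equiv-n-p(2nH_n-1)\pmod{p^2}$ and symmetrizes over the five parts of $a+b+c+d+n=p$. With $F(x)=\sum_{k<p}x^k/(k!)^2$ and $G(x)=\sum_{k<p}H_kx^k/(k!)^2$, this yields $-\frac5p\sum D(n)/n\equiv5[x^p]F^4-4[x^p]F^5+10[x^{p-1}]F^4G'\pmod p$. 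Two further ingredients are then essential:
\begin{enumerate}
\item[(i)] $[x^p]F^4\equiv0\pmod p$, obtained by computing $D(p)$ modulo $p^3$ in two ways;
\item[(ii)] removal of the harmonic-weighted term via the Bessel-type relations $(xF')'\equiv F-x^{p-1}$ and $(xG')'\equiv G+F'\pmod p$. These show that the Wronskian $W=x(FG'-F'G)$ satisfies $W-\frac{F^2-1}2\equiv0\pmod p$ in all degrees up to $p$ (using $[x^p]F^2\equiv0$ and $\sum_{k<p}H_k/k\equiv0$), whence $\frac52[x^{p-1}]F^4G'\equiv[x^p]F^5-[x^p]F^3$.
\end{enumerate}
Everything then reduces to $\frac5p\sum D(n)/n\equiv4[x^p]F^3\equiv4\sum_{n=1}^{p-1}\binom{2n}n/n^2\pmod p$. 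The single sum actually needed is $\sum_{n=1}^{p-1}\binom{2n}{n}/n^2\equiv\frac12\left(\frac p3\right)B_{p-2}(\frac13)\pmod p$ (Mattarei--Tauraso), not one of the character sums or $(-3)^k$ sums you list.
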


We are going to prove Theorems \ref{Th1.1}-\ref{Th-q} in the next section by using derivatives and $q$-derivatives.
The more sophisticated proof of Theorem \ref{Th1.2} will be given in Section 3.
We will pose some new conjectures in Section 4.

\section{Proofs of Theorems \ref{Th1.1}-\ref{Th-q}}
\setcounter{lemma}{0}
\setcounter{theorem}{0}
\setcounter{corollary}{0}
\setcounter{remark}{0}
\setcounter{equation}{0}

\medskip
\noindent{\it Proof of Theorem \ref{Th1.1}}.  Define
$$F(x):=\sum_{k=0}^{p-1}\f{x^k}{(k!)^m}.$$
For $n=0,\ldots,p-1$, clearly $[x^n]F(x)^l$ (the coefficient of $x^n$ in the expansion of $F(x)^l$) coincides with
$$\sum_{k_1,\ldots,k_l\in\N\atop k_1+\cdots+k_l=n}\f1{(k_1!)^m\cdots(k_l!)^m}=\f{S_l^{(m)}(n)}{(n!)^m}.$$
 Note that
$$F'(x)=\sum_{j=1}^{p-1}\f{jx^{j-1}}{j^m\times((j-1)!)^m}=\sum_{k=0}^{p-2}\f{x^k}{(k+1)^{m-1}(k!)^m}.$$
Thus
\begin{align*}[x^{p-1}]F(x)^l F'(x)
&=\sum_{k=0}^{p-1}[x^{p-1-k}]F(x)^l\times [x^k]F'(x)
\\&=\sum_{k=0}^{p-2}\f{S_l^{(m)}(p-1-k)}{((p-1-k)!)^m}\times\f1{(k+1)^{m-1}(k!)^m}
\\&=\sum_{n=1}^{p-1}\f{S_l^{(m)}(n)}{(p-n)^{m-1}(n!(p-1-n)!)^m}
\\&=\f1{((p-1)!)^m}\sum_{n=1}^{p-1}\bi{p-1}n^m\f{S_l^{(m)}(n)}{(p-n)^{m-1}}
\end{align*}
Combining this with the fact $$(F(x)^{l+1})'=(l+1)F(x)^lF'(x),$$ we get
\begin{equation}\label{l+1}\begin{aligned}p\times[x^p]F(x)^{l+1}&=[x^{p-1}](l+1)F(x)^lF'(x)
\\&=\f{l+1}{((p-1)!)^m}\sum_{n=1}^{p-1}\bi{p-1}n^m\f{S_l^{(m)}(n)}{(p-n)^{m-1}}.
\end{aligned}\end{equation}
Note that $(p-1)!\eq-1\pmod p$ by Wilson's theorem. Also, $$\bi{p-1}n=\prod_{k=1}^n\f{p-k}k\eq(-1)^n\pmod p$$
for all $n=1,\ldots,p-1$.
As $p\nmid(l+1)$, we obtain from \eqref{l+1} the desired congruence \eqref{S-cong}. This ends the proof. \qed

Our proof of Theorem \ref{Th-q} is motivated by the above proof of Theorem \ref{Th1.1}.

\medskip
\noindent{\it Proof of Theorem \ref{Th-q}}.
For a polynomial $P(x)=\sum_{r\in\N}c_rx^r$, its Jackson $q$-derivative (cf. \cite[p.\,488]{AAR}) is defined by
$$ \partial_q P(x):=\f{P(x)-P(qx)}{(1-q)x}=\sum_{r\in\Z^+}[r]_q c_r x^{r-1},$$
and hence
\begin{equation}\label{N-1}[x^{N-1}]\partial_q P(x)=[N]_qc_N=[N]_q[x^N]P(x).
\end{equation}

Set
$$F(x)=\sum_{r=0}^{N-1}\f{x^r}{([r]_q!)^m},\ G(x)=\prod_{j=0}^l F(q^jx)
\ \t{and}\ H(x)=\prod_{j=1}^l F(q^jx).$$
Then
$$G(x)=F(x)H(x)\ \ \t{and}\ \ G(qx)=\prod_{j=1}^{l+1}F(q^jx)=F(q^{l+1}x)H(x).$$
Hence
$$\partial_q G(x)=\f{G(x)-G(qx)}{(1-q)x}=H(x)\f{F(x)-F(q^{l+1}x)}{(1-q)x}.$$
Note that
\begin{align*}\f{F(x)-F(q^{l+1}x)}{(1-q)x}&=\sum_{r=1}^{N-1}\f{1-q^{(l+1)r}}{1-q}\cdot
\f{x^{r-1}}{([r]_q!)^m}
\\&=\sum_{0<r<N}\f{[r]_q[l+1]_{q^r}}{([r]_q!)^m}x^{r-1}
\\&=\sum_{0<r<N}\f{[l+1]_{q^r}}{[r]_q^{m-1}([r-1]_q!)^m}
x^{r-1}.
\end{align*}
Combining these with \eqref{N-1}, we deduce that
\begin{align*}[N]_q[x^N]G(x)&=[x^{N-1}]\partial_q G(x)=[x^{N-1}]\f{F(x)-F(q^{l+1}x)}{(1-q)x}H(x)
\\&=\sum_{0<r<N}\f{[l+1]_{q^r}}{[r]_q^{m-1}([r-1]_q!)^m}[x^{N-r}]H(x).
\end{align*}
For any positive integer $n<N$, clearly
$$[x^n]H(x)=\sum_{k_1,\ldots,k_l\in\N\atop k_1+\cdots+k_l=n}\f{q^{\sum_{j=1}^l jk_j}}{([k_1]_q!\cdots[k_l]_q!)^m}=\f{S_l^{(m)}(n;q)}{([n]_q!)^m}.$$
Therefore,
$$[N]_q[x^N]G(x)=\sum_{0<n<N}\f{[l+1]_{q^{N-n}}}{[N-n]_q^{m-1}([N-n-1]_q!)^m}
\cdot\f{S_l^{(m)}(n;q)}{([n]_q!)^m}$$
and hence
\begin{equation}\label{Nq}([N-1]_q!)^m[N]_q[x^N]G(x)
=\sum_{0<n<N}\qbinom{N-1}n^m\f{[l+1]_{q^{N-n}}}{[N-n]_q^{m-1}}S_l^{(m)}(n;q).
\end{equation}

For $0<n<N$, as $\Phi_N(q)$ divides $[N]_q=(1-q^N)/(1-q)$, we have
$$[l+1]_{q^{N-n}}\eq[l+1]_{q^{-n}}=q^{-ln}[l+1]_{q^n}\pmod{\Phi_N(q)},$$
$$[N-n]_q=\f{1-q^{N-n}}{1-q}\eq \f{1-q^{-n}}{1-q}=-q^{-n}[n]_q\pmod{\Phi_N(q)},$$
and
$$\qbinom{N-1}n=\prod_{j=1}^n\f{[N-j]_q}{[j]_q}\eq\prod_{j=1}^n(-q^{-j})=(-1)^nq^{-n(n+1)/2}\pmod{\Phi_N(q)}.$$
So, \eqref{Nq} implies that
$$\sum_{0<n<N}\f{(-1)^{mn}q^{-mn(n+1)/2}q^{-ln}}{(-q^{-n}[n]_q)^{m-1}}[l+1]_{q^n}
S_l^{(m)}(n;q)\eq0\pmod{\Phi_N(q)},$$
which is equivalent to the desired congruence \eqref{q-cong}.
This concludes our proof of Theorem \ref{Th-q}. \qed

\section{Proof of Theorem \ref{Th1.2}}
\setcounter{lemma}{0}
\setcounter{theorem}{0}
\setcounter{corollary}{0}
\setcounter{remark}{0}
\setcounter{equation}{0}

\begin{lemma}\label{Lem3.1} For any prime $p>5$, we have 
\begin{equation}\label{Ber}\sum_{n=1}^{p-1}\f{\bi{2n}n}{n^2}\eq\f12\l(\f p3\r)B_{p-2}\l(\f13\r)\pmod p.
\end{equation}
\end{lemma}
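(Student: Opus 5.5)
We need $\sum_{n=1}^{p-1}\binom{2n}n/n^2 \pmod p$. The plan is to reduce the central binomial coefficient modulo $p$ to a polynomial, which turns the sum into a weighted power sum of $n^{-2}$, and then evaluate that weighted sum through Bernoulli-polynomial sums over the ranges $(0,p/3)$ and $(0,p/6)$.

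For $0<n<p$ we have $\binom{2n}n\equiv\binom{(p-1)/2}n(-4)^n\pmod p$, and $\binom{2n}{n}\equiv0$ for $p/2<n<p$. Hence
$$\sum_{n=1}^{p-1}\frac{\binom{2n}n}{n^2}\equiv\sum_{n=1}^{(p-1)/2}\binom{(p-1)/2}n\frac{(-4)^n}{n^2}\pmod p.$$

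I would not expand this directly. Instead I would use the known identity (Sun–Tauraso type)
$$\sum_{k=1}^{n}\binom{n}{k}\frac{(-1)^{k}}{k}\,x^{k}=\sum_{j=1}^{n}\frac{(1-x)^j-1}{j}.$$
Iterating it once more, with the $1/k^2$ weight, expresses $\sum_k\binom nk(-x)^k/k^2$ as a double sum $\sum_{1\le i\le j\le n}\frac{(1-x)^i-1}{ij}$-type. We take $n=(p-1)/2$ and $x=4$, so $1-x=-3$. Modulo $p$, harmonic sums up to $(p-1)/2$ are handled by writing $1/j\equiv$ (via $p$ and Fermat quotients) and eventually reduce to sums of the form $\sum_{k}\frac{(-3)^k}{k^2}$ or, better, to character sums $\sum_{k<p/3}1/k^2$ and $\sum_{k<p/6}1/k^2$.

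The cleaner alternative route, which I would actually try first, goes as follows. Modulo $p$, $\binom{2n}{n}/n^2$ summed over $n$ can be compared with the known congruence $\sum_{k=1}^{p-1}\binom{2k}k/k\equiv -\frac83 p q_p(\ldots)$, or with Sun–Tauraso's result $\sum_{k=1}^{p-1}\binom{2k}{k}/k^2\equiv \frac12\left(\frac p3\right)B_{p-2}(1/3)$. That is exactly the statement, and it is proved via the expansion $\binom{2k}{k}\equiv\binom{-1/2}{k}(-4)^k$ and the generating function
$$\sum_k\binom{-1/2}{k}(-4)^ky^k=(1-4y)^{-1/2}.$$
One then uses the identity $\frac{1}{(p-1)/2}$-style "$p$-adic integration": $\sum_{k}\frac{a_k}{k^2}$ with $a_k=[y^k]f(y)$ becomes $\int\int f$. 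The evaluation reduces to the sums
$$\sum_{k=1}^{p-1}\frac{\left(\frac k3\right)}{k^2}\ \ \text{and}\ \ \sum_{0<k<p/3}\frac1{k^2}\equiv \frac12\left(\frac p3\right)B_{p-2}(1/3)\cdot(\text{const}),$$
the latter being a standard Lehmer-type evaluation. Concretely, the identity $\sum_{0<k<p/3}k^{-2}\equiv\frac12\left(\frac p3\right)B_{p-2}(1/3)$ follows from $\sum_{0\le k<p/3}k^{p-3}=\frac{B_{p-2}(\lfloor p/3\rfloor+1)-B_{p-2}}{p-2}$, the translation $B_{p-2}(x+p/3\text{-shift})$, and Fermat's little theorem $k^{p-3}\equiv k^{-2}$.

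The main obstacle is the middle step: converting the binomial-weighted sum $\sum\binom{(p-1)/2}{n}(-4)^n/n^2$ into the pure harmonic-type sum $\sum_{k<p/3}1/k^2$. My plan there is to use the symmetry $k\to p-k$ and the roots of unity filter with $\omega=e^{2\pi i/3}$. The value $-3=(1+2\omega)^2$ makes $(1-4)^{j}$ factor through $(1-\omega)$-powers, so the double harmonic sums collapse to $\sum_k\left(\frac k3\right)/k^2$. That sum equals $\sum_{k<p/3}$ contributions by a standard splitting of $[1,p-1]$ into thirds. Finally, the exceptional small primes, where the lemma requires $p>5$, need no separate treatment.
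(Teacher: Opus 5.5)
Your opening reduction $\sum_{n=1}^{p-1}\binom{2n}n/n^2\equiv\sum_{n=1}^{(p-1)/2}\binom{(p-1)/2}n(-4)^n/n^2\pmod p$ is exactly the paper's starting observation. Your Bernoulli-polynomial derivation of $\sum_{0<k<p/3}k^{-2}\equiv\frac12\left(\frac p3\right)B_{p-2}(\frac13)\pmod p$ is also fine. These are the two endpoints the paper quotes. But the whole content of the lemma is the passage between them, namely
\[T:=\sum_{n=1}^{(p-1)/2}\binom{(p-1)/2}{n}\frac{(-4)^n}{n^2}\equiv\sum_{0<k<p/3}\frac1{k^2}\pmod p,\]
and you do not prove it: you call it ``the main obstacle'' and offer only a heuristic. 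Your ``cleaner alternative route'' does not fill the hole, because it simply invokes the congruence to be proved. (The paper, for its part, takes the lemma from Mattarei and Tauraso.) The ``$p$-adic integration'' is just your iterated identity restated.

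Concretely, your iterated identity is correct and gives $T=\sum_{i=1}^{(p-1)/2}\frac{(-3)^i-1}{i}\bigl(H_{(p-1)/2}-H_{i-1}\bigr)$, where $H_n=\sum_{0<k\le n}1/k$. Since $\sum_{i=1}^{(p-1)/2}(-3)^i/i\equiv(2-2^p)/p\equiv H_{(p-1)/2}\pmod p$, the $H_{(p-1)/2}$-part drops out. The lemma thus becomes equivalent to $-\sum_{i=1}^{(p-1)/2}\frac{((-3)^i-1)H_{i-1}}{i}\equiv\sum_{0<k<p/3}k^{-2}\pmod p$, a half-range double harmonic sum with the non-periodic weight $(-3)^i$. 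Writing $-3=(1+2\omega)^2$ does not make this weight periodic in $i$, so no roots-of-unity filter applies and nothing ``collapses''.

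To obtain periodic weights you have to pass to the variable $u$ with $t=u(1-u)$, as in the classical identity $\sum_{k\ge1}\binom{2k}kt^k/k^2=2\,\mathrm{Li}_2(u)-\log^2(1-u)$. At $t=1$ the parameter $u$ is a primitive sixth root of unity $\zeta$. What must then be shown is, for instance, $T\equiv2\sum_{k=1}^{p-1}(\zeta^k+\zeta^{-k})/k^2\pmod p$. After that, splitting $k$ by residues modulo $6$ produces exactly the sums over $k<p/6$ and $k<p/3$ that the paper quotes. Proving that step needs genuinely new input, namely functional equations of finite polylogarithms, which is what the paper's citation of Mattarei--Tauraso supplies. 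Your proposal contains no substitute for it, so as it stands it is a plan rather than a proof.
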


\begin{remark} This lemma is known (cf. \cite{MT}). One may prove it by starting from the observation
\begin{align}\sum_{n=1}^{p-1}\f{\bi{2n}n}{n^2}&\eq\sum_{n=1}^{(p-1)/2}\f{\bi{2n}n}{n^2}
=\sum_{n=1}^{(p-1)/2}\bi{-1/2}n\f{(-4)^n}{n^2}
\\&\eq\sum_{n=1}^{(p-1)/2}\bi{(p-1)/2}n\f{(-4)^n}{n^2}\pmod p
\end{align}
and using 
$$\sum_{k=1}^{\lfloor p/3\rfloor}\f1{k^2}\eq\f15\sum_{k=1}^{\lfloor p/6\rfloor}\f1{k^2}
\eq\f12\l(\f p3\r)B_{p-2}\l(\f13\r)\pmod p.$$
\end{remark}

\medskip
\noindent{\it Proof of Theorem \ref{Th1.2}}. It is easy to verify \eqref{D-conj} for $p=2,3,5$ directly. Below we assume $p>5$.

Recall that the harmonic numbers are given by
$$H_n:=\sum_{0<k\ls n}\f1k\ \ (n=0,1,2,\ldots).$$
By \eqref{S-D} we have
\begin{align*}\sum_{n=1}^{p-1}\f{D(n)}n&=\sum_{n=1}^{p-1}\f1{p-n}\sum_{a,b,c,d\in\N
\atop a+b+c+d=p-n}\bi{p-n}{a,b,c,d}^2
\\&=\sum_{n=1}^{p-1}\f{p^2}{(p-n)\bi pn^2}\sum_{a,b,c,d\in\N\atop a+b+c+d=p-n}\f{((p-1)!)^2}{(a!)^2(b!)^2(c!)^2(d!)^2(n!)^2}
\end{align*}
For $n\in\{1,\ldots,p-1\}$, clearly
\begin{align*}\f{p^2}{(p-n)\bi pn^2}&=\f{n^2}{(p-n)\bi{p-1}{n-1}^2}
=\f{(p+n)n^2}{(p^2-n^2)\prod_{0<k<n}(p/k-1)^2}
\\&\eq-\f{p+n}{\prod_{0<k<n}(1-2p/k)}\eq-(p+n)\prod_{0<k<n}\l(1+\f{2p}k\r)
\\&\eq-(p+n)(1+2pH_{n-1})\eq-n-p(2nH_n-1)\pmod{p^2}.
\end{align*}
Set $$A=\{{\bf a}=(a_1,\ldots,a_5)\in\N^5:\ a_1+\cdots+a_5=p\ \&\ |\{1\ls r\ls 5:\ a_r\not=0\}|\gs2\},$$
and define
$$Q({\bf a})=Q(a_1,\ldots,a_5)=\f{((p-1)!)^2}{(a_1!)^2(a_2!)^2(a_3!)^2(a_4!)^2(a_5!)^2}$$
for ${\bf a}\in A$.
 Let $\da_0=0$ and $\da_n=1$ for $n=1,\ldots,p-1$. By the above,
\begin{align*}5\sum_{n=1}^{p-1}\f{D(n)}n&\eq-5\sum_{n=1}^{p-1}(n+p(2nH_n-\da_n))\sum_{a,b,c,d\in\N\atop a+b+c+d=p-n}Q(a,b,c,d,n)
\\&\eq-\sum_{{\bf a}\in A}\sum_{r=1}^5(a_r+p(2a_rH_{a_r}-\da_{a_r}))Q(a_1,\ldots,a_5)
\pmod {p^2}
\end{align*}
and hence
\begin{equation}\label{pq}\f5p\sum_{n=1}^{p-1}\f{D(n)}n
\eq-\sum_{{\bf a}\in A}\f{1+2\sum_{r=1}^5a_rH_{a_r}
-\sum_{r=1}^5 \da_{a_r}}{(a_1!)^2(a_2!)^2(a_3!)^2(a_4!)^2(a_5!)^2}
\pmod{p}.
\end{equation}

Set
$$F(x)=\sum_{k=0}^{p-1}\f{x^k}{(k!)^2}\ \ \t{and}\ \ G(x)=\sum_{k=0}^{p-1}\f{H_kx^k}{(k!)^2}.$$
Then
$$\sum_{{\bf a}\in A}\f1{(a_1!)^2\cdots(a_5!)^2}=[x^p]F(x)^5$$
and
$$\sum_{{\bf a}\in A}\f{\sum_{r=1}^5\da_{a_r}}{(a_1!)^2\cdots(a_5!)^2}
=5[x^p]F(x)^5-5[x^p]F(x)^4.$$
Also,
$$\sum_{{\bf a}\in A}\f{\sum_{r=1}^5 a_rH_{a_r}}{(a_1!)^2\cdots(a_5!)^2}=5[x^{p-1}]F(x)^4G'(x).$$
In view of these, \eqref{pq} has the equivalent form
\begin{equation}\label{equiv}-\f 5p\sum_{n=1}^{p-1}\f{D(n)}n
\eq5[x^p]F(x)^4-4[x^p]F(x)^5+10[x^{p-1}]F(x)^4G'(x)\pmod p.
\end{equation}
Observe that
\begin{align*}D(p)&=\sum_{a,b,c,d\in\N\atop a+b+c+d=p}\f{(p!)^2}{(a!)^2(b!)^2(c!)^2(d!)^2}
\\&=4+\sum_{a,b,c,d\in\{0,\ldots,p-1\}\atop a+b+c+d=p}\f{p^2((p-1)!)^2}{(a!)^2(b!)^2(c!)^2(d!)^2}
\\&=4+p^2((p-1)!)^2[x^p]F(x)^4\eq 4+p^2[x^p]F(x)^4\pmod{p^3}.
\end{align*}
On the other hand,
\begin{align*}D(p)&=\sum_{k=0}^p\bi pk^2\bi{2k}k\bi{2(p-k)}{p-k}
\\&=2\bi{2p}p+\sum_{k=1}^{p-1}\f {p^2}{k^2}\bi{p-1}{k-1}^2\bi{2k}k\bi{2(p-k)}{p-k}
\\&\eq4\bi{2p-1}{p-1}\eq4\pmod{p^3}
\end{align*}
since $\bi{2k}k\bi{2(p-k)}{p-k}\eq0\pmod p$ for all $k=1,\ldots,p-1$, and $\bi{2p-1}{p-1}\eq1\pmod {p^3}$ by Wolstholme's theorem (cf. \cite{W}). Thus $[x^p]F(x)^4\eq0\pmod p$ and hence
we simplify \eqref{equiv} as
\begin{equation}\label{5p}\f 5p\sum_{n=1}^{p-1}\f{D(n)}n
\eq4[x^p]F(x)^5-10[x^{p-1}]F(x)^4G'(x)\pmod p.
\end{equation}

Clearly,
\begin{align*}(xF'(x))'&=F'(x)+xF''(x)=\sum_{k=1}^{p-1}\f{(k+k(k-1))x^{k-1}}{k^2((k-1)!)^2}
\\&=\sum_{k=1}^{p-1}\f{x^{k-1}}{((k-1)!)^2}=F(x)-\f{x^{p-1}}{((p-1)!)^2}
\\&\eq F(x)-x^{p-1}\pmod p
\end{align*}
and
\begin{align*}(xG'(x))'&=G'(x)+xG''(x)=\sum_{k=1}^{p-1}\f{(k+k(k-1))H_kx^{k-1}}{k^2((k-1)!)^2}
\\&=\sum_{k=1}^{p-1}\f{(H_{k-1}+1/k)x^{k-1}}{((k-1)!)^2}=G(x)-\f{H_{p-1}x^{p-1}}{((p-1)!)^2}
+\sum_{k=1}^{p-1}\f{kx^{k-1}}{(k!)^2}
\\&\eq G(x)+F'(x)\pmod p.
\end{align*}
For the polynomial
$$W(x):=x(F(x)G'(x)-F'(x)G(x))=F(x)(xG'(x))-(xF'(x))G(x),$$
by the above we have
\begin{align*}
W'(x)&=F(x)(xG'(x))'+F'(x)xG'(x)-xF'(x)G'(x)-(xF'(x))'G(x)
\\&\eq F(x)(F'(x)+G(x))-(F(x)-x^{p-1})G(x)
\\&\eq F(x)F'(x)+x^{p-1}G(x)\pmod p
\end{align*}
and hence
\begin{equation}\label{W}\l(W(x)-\f{F(x)^2-1}2\r)'\eq x^{p-1}G(x)\pmod p.
\end{equation}
Since
$$[x^p]F(x)^2=\sum_{k=1}^{p-1}\f1{(k!)^2((p-k)!)^2}
=\sum_{k=1}^{p-1}\f{\bi {p-1}k^2}{(p-k)^2((p-1)!)^2},$$
we have
\begin{equation}\label{F2}
[x^p]F(x)^2\eq\sum_{k=1}^{p-1}\f1{k^2}=2\sum_{k=1}^{(p-1)/2}\l(\f1{k^2}+\f1{(p-k)^2}\r)\eq0\pmod p
\end{equation}
and hence
\begin{align*}&\ [x^p]\l(W(x)-\f{F(x)^2-1}2\r)
\\\eq&\ [x^p]W(x)=[x^{p-1}](F(x)G'(x)-F'(x)G(x))
\\\eq&\ \sum_{k=1}^{p-1}\l(\f1{(k!)^2}\times\f{(p-k)H_{p-k}}{((p-k)!)^2}
-\f{k}{(k!)^2}\times\f{H_{p-k}}{((p-k)!)^2}
\r)
\\\eq&\ \sum_{k=1}^{p-1}\f{\bi{p-1}k^2}{((p-1)!)^2}\l(\f{H_{p-k}}{p-k}-\f{kH_{p-k}}{(p-k)^2}\r)
\\\eq&\ 2\sum_{k=1}^{p-1}\f{H_{p-k}}{p-k}=2\sum_{k=1}^{p-1}\f{H_k}k
\eq0\pmod p
\end{align*}
with the aid of \cite[Lemma 2.3]{S12} and its proof.
Combining this with \eqref{W} we find that
$$W(x)-\f{F(x)^2-1}2\eq x^{p+1}P_p(x)\pmod p$$
for some $P_p(x)\in\Z_p[x]$, where $\Z_p$ is the ring of $p$-adic integers.
Thus,
$$[x^p]W(x)F(x)^3\eq[x^p]\f{F(x)^2-1}2F(x)^3=\f12[x^p]F(x)^5-\f12[x^p]F(x)^3\pmod p.$$
On the other hand,
\begin{align*}[x^p]W(x)F(x)^3&=[x^{p-1}](F(x)G'(x)-F'(x)G(x))F(x)^3
\\&=[x^{p-1}]F(x)^4G'(x)-\f14[x^{p-1}](F(x)^4)'G(x)
\\&=\f54[x^{p-1}]F(x)^4G'(x)-\f14[x^{p-1}](F(x)^4G(x))'
\\&\eq\f54[x^{p-1}]F(x)^4G'(x)\pmod p.
\end{align*}
Therefore,
\begin{equation}\label{52}\f52[x^{p-1}]F(x)^4G'(x)\eq[x^p]F(x)^5-[x^p]F(x)^3\pmod p.
\end{equation}

Combining \eqref{52} with \eqref{5p}, we deduce that
\begin{equation}\label{cube}\f5p\sum_{n=1}^{p-1}\f{D(n)}n\eq4[x^p]F(x)^3\pmod p.
\end{equation}
Observe that
$$[x^p]F(x)^3=[x^p]F(x)^2\sum_{n=1}^p\f{x^{p-n}}{((p-n)!)^2}
=\sum_{n=1}^{p-1}\f{[x^n]F(x)^2}{((p-n)!)^2}+[x^p]F(x)^2$$
and
$$[x^n]F(x)^2=\sum_{k=0}^n\f1{(k!)^2((n-k)!)^2}=\f{\sum_{k=0}^n\bi nk^2}{(n!)^2}=\f{\bi{2n}n}{(n!)^2}$$
for all $n=1,\ldots,p-1$. Thus, with the aid of \eqref{F2}, we have
\begin{align*}[x^p]F(x)^3&\eq\sum_{n=1}^{p-1}\f{\bi{2n}n}{(n!)^2((p-n)!)^2}
=\sum_{n=1}^{p-1}\f{\bi{p-1}n^2\bi{2n}n}{(p-n)^2((p-1)!)^2}
\\&\eq\sum_{n=1}^{p-1}\f{\bi{2n}n}{n^2}\pmod p.
\end{align*}
Thus, in view of \eqref{Ber}, we get
$$[x^p]F(x)^3\eq\f12\l(\f p3\r)B_{p-2}\l(\f13\r)\pmod p.$$
Combining this with \eqref{cube}, we obtain the desired congruence \eqref{D-conj}.
This completes our proof of Theorem \ref{Th1.2}. \qed

\section{Some further conjectures}
\setcounter{lemma}{0}
\setcounter{theorem}{0}
\setcounter{corollary}{0}
\setcounter{conjecture}{0}
\setcounter{remark}{0}
\setcounter{equation}{0}

Our first conjecture involves the Domb numbers and the Lucas sequences $(u_n)_{n\ge0}$ and $(v_n)_{n\ge0}$, where
$$u_0=0,\ u_1=1,\ \text{and}\ u_{n+1}=11u_n-u_{n-1}\ \text{for}\ n=1,2,3,\ldots,$$
and
$$v_0=2,\ v_1=11,\ \text{and}\ v_{n+1}=11v_n-v_{n-1}\ \text{for}\ n=1,2,3,\ldots.$$

\begin{conjecture} Let $p$ be an odd prime.

{\rm (i)} If one of the Legendre symbols $(\frac p3)$ and $(\frac p{13})$ is $1$, then
$$\sum_{k=0}^{p-1}D(k)u_k\equiv0\pmod{p^2}.$$
Moreover, when $(\frac p3)=(\frac p{13})=1$, we have
$$\sum_{k=0}^{p-1}D(k)u_k\equiv0\pmod{p^3}.$$

{\rm (ii)} If the Jacobi symbol $(\frac p{39})$ is $-1$, then
$$\sum_{k=0}^{p-1}D(k)v_k\equiv0\pmod{p^2}.$$
\end{conjecture}

In the spirit of \cite{S13b}, we make the following conjecture based on our computation.

\begin{conjecture} The sequence $(S_3^{(3)}(n+1)/S_3^{(3)}(n))_{n\gs0}$
is strictly increasing to the limit $27$, and the sequence 
$$\l(\!\!\root{n+1}\of{S_3^{(3)}(n+1)}\bigg/\root n\of{S_3^{(3)}(n)}\r)_{n\gs1}$$
is strictly decreasing to the limit $1$.
\end{conjecture}


\begin{thebibliography}{99}

\bibitem{AAR} G. E. Andrews, R. Askey and R. Roy, Special Functions,
Encyclopedia of Mathematics and its Applications, vol. 71,
Cambridge Univ. Press, Cambridge, 1999.

\bibitem{Cam} J. M. Campbell, {\it On modular forms and Domb numbers},
Integral Transforms Spec. Funct. {\bf 36} (2025), 439--448.

\bibitem{CCL} H. H. Chan, S. H. Chan and Z. Liu,
{Domb's numbers and Ramanujan-Sato type series for $1/\pi$},
Adv. Math. {\bf 186} (2004), 396--410.

\bibitem{F} J. Franel, {\it On a question of Laisant},
 L'Interm\'ediaire des Math\'ematiciens, {\bf 1} (1894), 45--47.

\bibitem{GKP} R. L. Graham, D. E. Knuth and O. Patashnik,
  Concrete Mathematics, 2nd ed., Addison-Wesley, New York, 1994.

\bibitem {GMP} V.J.W. Guo, G.-S. Mao and H. Pan, {\it Proof of a conjecture involving Sun polynomials}, J. Difference Equ. Appl. {\bf 22} (2016), 1184--1197.

\bibitem{Liu} J.-C. Liu, {\it Supercongruences for sums involving Domb numbers},
Bull. Sci. Math. {\bf 169} (2021), Article 102992.

 \bibitem{MT} S. Mattarei and R. Tautaso,{\it Congruences for central binomial sums and finite polylogarithms}, J. Number Theory {\bf 133} (2013), 131--157.

\bibitem{Rogers}
M. D. Rogers,
{\it New ${}_5F_4$ hypergeometric transformations, three-variable Mahler measures, and formulas for $1/\pi$},
 Ramanujan J. {\bf 18} (2009), 327--340.

\bibitem{OEIS} N.J.A. Sloane, Sequence A002895 (Domb numbers) at OEIS (On-Line Encyclopedia of Integer Sequences), https://oeis.org/A002895.

\bibitem{AAM} Z.-W. Sun, {\it Congruences for Franel numbers}, Adv. Appl. Math. {\bf 51} (2013), 524--535.

\bibitem{S12} Z.-W. Sun, {\it Arithmetic theory of harmonic numbers}, Proc. Amer. Math. Soc.
{\bf 140} (2012), 415--428.

\bibitem{S13} Z.-W. Sun,  {\it Conjectures and results on $x^2$ mod $p^2$ with $4p=x^2 + dy^2$}, in: Number Theory and Related Area (eds., Y. Ouyang, C. Xing, F. Xu and P. Zhang), Adv. Lect. Math. 27, Higher Education Press and International Press, Beijing-Boston, 2013, pp. 149--197.

\bibitem{S13b} Z.-W. Sun, {\it Conjectures involving arithmetical sequences}, in: Number Theory: Arithmetic in Shangri-La (eds., S. Kanemitsu,
H. Li and J. Liu), Proc. 6th China-Japan Seminar (Shanghai,
August 15-17, 2011), World Sci., Singapore, 2013, pp.
244--258.

\bibitem{S16} Z.-W. Sun, {\it Congruences involving $g_n(x)=\sum_{k=0}^n\bi nk^2\bi{2k}kx^k$},
Ramanujan J. {\bf 40} (2016), 511--533.

\bibitem{S19} Z.-W. Sun, {\it Open conjectures on congruences}, Nanjing Univ. J. Math. Biquarterly
{\bf 36} (2019), no.\,1, 1--99.

\bibitem{S20} Z.-W. Sun, {\it New type series for powers of $\pi$}, J. Comb. Number Theory
{\bf 12} (2020), no.\,3, 157--208.

\bibitem{W} J. Wolstenholme, {\it On certain properties of prime numbers}, Quart. J. Appl. Math.
{\bf 5} (1862), 35--39.

\end{thebibliography}
\end{document}